\newcommand{\C}{{\mathbb C}}
\newcommand{\Cont}{{\mathscr C}}
\renewcommand{\O}{{\mathscr O}}
\newcommand{\R}{{\mathbb R}}
\newtheorem{theorem}{\bf Theorem}
\newtheorem*{lemma}{\bf Lemma}
\title{Affine simplices in Oka manifolds}
\author{Finnur L\'arusson}
\address{School of Mathematical Sciences, University of Adelaide, Adelaide SA 5005, Australia.} 
\email{finnur.larusson@adelaide.edu.au}
\subjclass[2000]{Primary 32Q55.  Secondary 18G30, 32C18, 55U10.}
\keywords{Complex manifold, Stein manifold, Oka manifold, Oka property, simplicial set, singular set, affine simplex, homotopy type, weak equivalence.}
\date{27 April 2009.  Minor changes 25 May 2009, 8 June 2009, and 26 July 2009}
\begin{document}

\begin{abstract}  
We show that the homotopy type of a complex manifold $X$ satisfying the Oka property is captured by holomorphic maps from the affine spaces $\C^n$, $n\geq 0$, into $X$.  Among such $X$ are all complex Lie groups and their homogeneous spaces.  We present generalisations of this result, one of which states that the homotopy type of the space of continuous maps from any smooth manifold to $X$ is given by a simplicial set whose simplices are holomorphic maps into $X$.   

\end{abstract}

\maketitle

\section{Introduction}

\noindent
Motivated by Gromov's comments in his seminal paper \cite{Gromov}, Sec.\ 3.5.G and 3.5.G', we prove in Sec.\ 2 that the homotopy type of an Oka manifold $X$ (as a topological space) is captured by holomorphic maps from the affine spaces $\C^n$, $n\geq 0$, into $X$.  In Sec.\ 3 we present generalisations of this result.  We start with a very brief review of some background material.

The concept of an Oka manifold has evolved from Gromov's paper and subsequent work, mainly due to Forstneri\v c, see in particular \cite{Forstneric1} and \cite{Forstneric2}.  By a {\it Stein inclusion} we mean the inclusion into a reduced Stein space $S$ (or a Stein manifold: the choice is immaterial) of a closed analytic subvariety $T$.  A complex manifold $X$ has the {\it basic Oka property with interpolation} (BOPI) with respect to $T\hookrightarrow S$ if every continuous map $h:S\to X$ with $h|T$ holomorphic can be deformed to a holomorphic map $S\to X$ with $h|T$ fixed.  Also, $X$ has the {\it interpolation property} with respect to $T\hookrightarrow S$ if every holomorphic map $h:T\to X$ extends to a holomorphic map $S\to X$.  The following are equivalent (see \cite{Larusson2}) and define what it means for $X$ to be Oka:
\begin{enumerate}
\item $X$ has BOPI with respect to every Stein inclusion.
\item $X$ has the interpolation property, or equivalently BOPI, with respect to every Stein inclusion $T\hookrightarrow\C^n$, $n\geq 1$, where $T$ is contractible (holomorphically or topologically: the choice is immaterial).
\end{enumerate}
The Oka property has several other equivalent formulations.  Each of these has a parametric version, where instead of a single map $h$ as above we have a family of maps depending continuously on a parameter.  The parametric Oka properties are all equivalent \cite{Forstneric1}, and are equivalent to the Oka property \cite{Forstneric6}.

A holomorphic map $f:X\to Y$ has the {\it parametric Oka property with interpolation} (POPI) if for every Stein inclusion $T\hookrightarrow S$, every finite polyhedron $P$ with a subpolyhedron $Q$, and every continuous map $g:S\times P\to X$ such that the restriction $g|S\times Q$ is holomorphic along $S$ (meaning that $g(\cdot,q):S\to X$ is holomorphic for each $q\in Q$), the restriction $g|T\times P$ is holomorphic along $T$, and the composition $f\circ g$ is holomorphic along $S$, there is a continuous map $G:S\times P\times I\to X$, where $I=[0,1]$, such that:
\begin{enumerate}
\item  $G(\cdot,\cdot,0)=g$,
\item  $G(\cdot,\cdot,1):S\times P\to X$ is holomorphic along $S$,
\item  $G(\cdot,\cdot,t)=g$ on $S\times Q$ and on $T\times P$ for all $t\in I$,
\item  $f\circ G(\cdot,\cdot,t)=f\circ g$ on $S\times P$ for all $t\in I$.
\end{enumerate}
Equivalently, $Q\hookrightarrow P$ may be taken to be any cofibration between cofibrant topological spaces, such as the inclusion of a subcomplex in a CW-complex, and the existence of $G$ can be replaced by the stronger statement that the inclusion into the space, with the compact-open topology, of continuous maps $h:S\times P\to X$ with $h=g$ on $S\times Q$ and on $T\times P$ and $f\circ h=f\circ g$ on $S\times P$ of the subspace of maps that are holomorphic along $S$ is acyclic, that is, a weak homotopy equivalence (see \cite{Larusson1}, \S 16).  Taking $P$ to be a point and $Q$ empty defines BOPI for $f$.  A complex manifold $X$ is Oka if and only if the constant map from $X$ to a point satisfies BOPI or, equivalently, POPI.  For maps in general, it is not known whether BOPI implies POPI.

The notion of a holomorphic submersion being subelliptic was defined by Forstneri\v c \cite{Forstneric3}, generalising the concept of ellipticity due to Gromov \cite{Gromov}.  Subellipticity is the weakest currently-known sufficient geometric condition for a holomorphic map to satisfy POPI (see Forstneri\v c's recently-proved parametric Oka principle for liftings \cite{Forstneric4}) and for a complex manifold to be Oka.

By the influential work of Grauert in \cite{Grauert1} and \cite{Grauert2}, the primary examples of Oka manifolds, to which our results apply, are complex Lie groups and their homogeneous spaces, that is, complex manifolds on which a complex Lie group acts holomorphically and transitively.  Among other known examples are $\mathbb C^n\setminus A$, where $A$ is an algebraic or a tame analytic subvariety of codimension at least 2, $\mathbb P^n\setminus A$, where $A$ is a subvariety of codimension at least 2, Hopf manifolds, Hirzebruch surfaces, and the complement of a finite set in a complex torus of dimension at least 2 (see \cite{Forstneric5} and \cite{Forstneric2}).

\section{Oka manifolds are homotopically elliptic}

\noindent
Our results are naturally formulated in the language of simplicial sets.  Simplicial sets are combinatorial objects that have a homotopy theory equivalent to that of topological spaces, but tend to be more useful or at least more convenient than topological spaces for various homotopy-theoretic purposes.  For an introduction to simplicial sets, we refer the reader to \cite{Goerss-Jardine} or \cite{May}.

We denote by $\mathbf\Delta$ the category of finite ordinals and order-preserving maps.  The objects of $\mathbf\Delta$ are the sets $\mathbf n=\{0,1,2,\dots,n\}$, $n\in\mathbb N$, with the usual order, and a morphism $\theta:\mathbf n\to\mathbf m$ is a map such that $\theta(i)\leq \theta(j)$ whenever $0\leq i\leq j\leq n$.  A cosimplicial object in a category $\mathcal C$ is a functor $\mathbf\Delta\to\mathcal C$.  A simplicial object in $\mathcal C$ is a functor from the opposite category $\mathbf\Delta^\text{op}$ to $\mathcal C$.  In particular, a simplicial set is a functor from $\mathbf\Delta^\text{op}$ to the category $\mathbf{Set}$ of sets.  The category of simplicial objects in $\mathcal C$ is denoted $s\mathcal C$.  A cosimplicial object $A_\bullet$ in $\mathcal C$ induces a functor $h_{A_\bullet}:\mathcal C\to s\mathbf{Set}$, $X\mapsto \hom_\mathcal C(A_\bullet,X)$.  We call the simplicial set $\hom_\mathcal C(A_\bullet,X)$ the homotopy type of $X$ with respect to $A_\bullet$.

The standard $n$-simplex $T_n$, $n\geq 0$, is the subset
$$T_n=\{(t_0,\dots,t_n)\in\R^{n+1}:t_0+\dots+t_n=1, t_0,\dots,t_n\geq 0\}$$
of $\R^{n+1}$ with the subspace topology.  An order-preserving map $\theta:\mathbf n\to\mathbf m$ induces a continuous map $\theta_*:T_n\to T_m$ defined by the formula $\theta_*(t_0,\dots,t_n)=(s_0,\dots,s_m)$, where
$$s_i=\sum_{j\in\theta^{-1}(i)} t_j$$
(the sum is interpreted as zero if $\theta^{-1}(i)$ is empty).  It is easy to check that this defines a cosimplicial object $T_\bullet$ in the category of topological spaces.  The homotopy type $sX=\Cont(T_\bullet,X)$ of a topological space $X$ with respect to $T_\bullet$ is the usual homotopy type of $X$.  Here, for each $n\geq 0$, $\Cont(T_n,X)$ denotes the set of continuous maps $T_n\to X$.  The simplicial set $sX$ is called the singular set of $X$.  It is a fibrant simplicial set, that is, a Kan complex.

The {\it affine $n$-simplex} $A_n$, $n\geq 0$, is the affine subspace
$$A_n=\{(t_0,\dots,t_n)\in\C^{n+1}:t_0+\dots+t_n=1\}$$
of $\C^{n+1}$, viewed as a complex manifold biholomorphic to $\C^n$.  An order-preserving map $\theta:\mathbf n\to\mathbf m$ induces a holomorphic map $\theta_*:A_n\to A_m$ defined by the same formula as above, and we have a cosimplicial object $A_\bullet$ in the category of complex manifolds.  We call the homotopy type $eX=\O(A_\bullet,X)$ of a complex manifold $X$ with respect to $A_\bullet$ the {\it affine homotopy type} of $X$.  Here, for each $n\geq 0$, $\O(A_n,X)$ denotes the set of holomorphic maps $A_n\to X$.  We also call the simplicial set $eX$ the {\it affine singular set} of $X$.

A holomorphic map $A_n\to X$ is determined by its restriction to $T_n\subset A_n$, so we have a monomorphism, that is, a cofibration $eX\hookrightarrow sX$.  The following lemma comes from basic homotopy theory.

\begin{lemma}  For a complex manifold $X$, the following are equivalent.
\begin{enumerate}
\item[(a)]  The affine singular set $eX$ is fibrant and the cofibration $eX\hookrightarrow sX$ is a weak equivalence of simplicial sets.
\item[(b)]  The cofibration $eX\hookrightarrow sX$ is the inclusion of a strong deformation retract.
\end{enumerate}
\end{lemma}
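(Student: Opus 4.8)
The plan is to work entirely in the standard Kan--Quillen model structure on the category $s\mathbf{Set}$, in which the cofibrations are the monomorphisms, the fibrations are the Kan fibrations, the weak equivalences are the weak homotopy equivalences, every object is cofibrant, and the fibrant objects are precisely the Kan complexes. Recall that $sX$ is fibrant and that $eX\hookrightarrow sX$ is already known to be a cofibration; write $i$ for this inclusion. The content of (a) is then that $i$ is an \emph{acyclic} cofibration and that $eX$ is fibrant, while (b) asks for a retraction $r\colon sX\to eX$ with $r\circ i=\mathrm{id}$ together with a homotopy from $\mathrm{id}_{sX}$ to $i\circ r$ that is stationary on $eX$.

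For the direction (b)$\Rightarrow$(a) I would argue as follows. An inclusion of a strong deformation retract is in particular a homotopy equivalence, hence a weak equivalence, which gives the second half of (a). For fibrancy, the retraction $r$ exhibits $eX$ as a retract of $sX$ in $s\mathbf{Set}$ (since $r\circ i=\mathrm{id}$), and the class of fibrant objects is closed under retracts; as $sX$ is fibrant, so is $eX$.

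The substance lies in (a)$\Rightarrow$(b). First I would produce the retraction by solving the lifting problem with $i$ on the left and the terminal map $eX\to *$ on the right, the top map being $\mathrm{id}_{eX}$ and the bottom map the unique map $sX\to *$. Since $i$ is an acyclic cofibration and $eX\to *$ is a fibration (because $eX$ is fibrant by hypothesis), a lift $r\colon sX\to eX$ exists, and it satisfies $r\circ i=\mathrm{id}_{eX}$ by construction.

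The main step, and the one I expect to be the crux, is the construction of the stationary homotopy, for which I would invoke the pushout-product axiom (SM7) of the cartesian simplicial model structure. The two maps $\mathrm{id}_{sX}$ and $i\circ r$ from $sX$ to $sX$ agree on $eX$ (both restrict to $i$, using $r\circ i=\mathrm{id}$), so together with the constant homotopy on $eX$ they assemble into a map
$$(sX\times\partial\Delta^1)\cup(eX\times\Delta^1)\longrightarrow sX$$
that is $\mathrm{id}_{sX}$ on one end, $i\circ r$ on the other, and $i\circ\mathrm{pr}$ on $eX\times\Delta^1$. The inclusion of this domain into $sX\times\Delta^1$ is the pushout-product of the acyclic cofibration $i$ with the cofibration $\partial\Delta^1\hookrightarrow\Delta^1$, and is therefore itself an acyclic cofibration. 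Lifting it against the fibration $sX\to *$ yields a map $H\colon sX\times\Delta^1\to sX$ restricting to the prescribed data, which is exactly a strong deformation retraction of $sX$ onto $eX$. The one point requiring care is the identification of the pushout-product domain with the honest union $(sX\times\partial\Delta^1)\cup(eX\times\Delta^1)$, which holds because $eX\subseteq sX$ forces the relevant intersection to be $eX\times\partial\Delta^1$.
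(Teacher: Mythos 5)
Your argument is correct and is essentially the paper's own proof with the citations unpacked: the paper disposes of (a)$\Rightarrow$(b) by citing Hirschhorn's Prop.\ 7.6.11, whose proof is exactly your lifting-plus-pushout-product argument (valid here since $sX$ is a Kan complex and the Kan--Quillen structure is a simplicial model category), and of (b)$\Rightarrow$(a) by citing Prop.\ 7.8.3 together with the same retract-of-a-fibrant-object observation you make. Nothing is missing.
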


\begin{proof}  (a) $\Rightarrow$ (b) by \cite{Hirschhorn}, Prop.\ 7.6.11.

(b) $\Rightarrow$ (a) by \cite{Hirschhorn}, Prop.\ 7.8.3, and since a retract of a fibrant object is fibrant.
\end{proof}

We say that $X$ is {\it homotopically elliptic} if conditions (a) and (b) are satisfied.  Then the usual homotopy type of $X$ as a topological space is represented by the affine singular set $eX$ of $X$.

If $X$ is connected and homotopically elliptic, then $X$ is $\C$-connected, meaning that any two points in $X$ can be joined by an entire curve.  In fact, any finite subset of $X$ lies in a holomorphic image of $\C$.  On the other hand, if $X$ is Brody hyperbolic, then $eX$ is discrete.

\begin{theorem}  
\label{mainresult}
An Oka manifold is homotopically elliptic.
\end{theorem}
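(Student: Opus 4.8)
By the Lemma it is enough to verify condition (a): that $eX$ is a Kan complex and that the cofibration $eX\hookrightarrow sX$ is a weak equivalence. My plan is to derive both statements from BOPI, the defining property of an Oka manifold, applied to Stein inclusions of the form $T\hookrightarrow A_n$, where $A_n\cong\C^n$ is Stein and $T$ is a union of faces of $A_n$. Apart from BOPI the only input will be a topological fact: that certain contractible unions of faces are strong deformation retracts of $A_n$. It is worth noting that only BOPI, not the full parametric property, will be needed, since each simplicial filling or lifting problem is solved by a single application of BOPI.

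For fibrancy I would argue as follows. A map of the $k$-th horn $\Lambda^n_k\to eX$, that is, of the union of all but the $k$-th face of $\Delta^n$, is the same thing as a holomorphic map $g$ on the subvariety $W=\bigcup_{i\neq k}\{t_i=0\}$ of $A_n$, the faces being glued compatibly along their intersections. The $n$ hyperplanes making up $W$ all pass through the single point $p_k$ with $t_k=1$, so $W$ is star-shaped about $p_k$, hence contractible; as $W\hookrightarrow A_n$ is a cofibration and $A_n$ is contractible, $W$ is a strong deformation retract of $A_n$. Composing $g$ with a retraction $A_n\to W$ produces a continuous extension $G\colon A_n\to X$ of $g$, and applying BOPI to the Stein inclusion $W\hookrightarrow A_n$ deforms $G$, rel $W$, to a holomorphic map $A_n\to X$ that fills the horn. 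Thus $eX$ is fibrant.

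For the weak equivalence, both $eX$ and $sX$ are now Kan, so it suffices, by the standard relative-lifting characterisation of weak equivalences between Kan complexes (see \cite{Goerss-Jardine}), to solve, for each $n\geq1$, every square with $\partial\Delta^n\to eX$ over $\Delta^n\to sX$, up to a homotopy of the bottom map rel $\partial\Delta^n$. Unwinding the definitions, such a square is a holomorphic map $g\colon\partial A_n\to X$ on the boundary subvariety $\partial A_n=\bigcup_{i=0}^n\{t_i=0\}$ together with a continuous map $b\colon T_n\to X$ agreeing with $g$ on $\partial T_n=\partial A_n\cap T_n$, and I must produce a holomorphic $\tilde g\colon A_n\to X$ with $\tilde g|\partial A_n=g$ and $\tilde g|T_n\simeq b$ rel $\partial T_n$. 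I would first glue $g$ and $b$ to a continuous map on $Z_n=\partial A_n\cup T_n$ and extend it to a continuous $C\colon A_n\to X$, using that $Z_n$ is a strong deformation retract of $A_n$; then $C|\partial A_n=g$ and $C|T_n=b$ exactly. Applying BOPI to the Stein inclusion $\partial A_n\hookrightarrow A_n$ deforms $C$, rel $\partial A_n$, to a holomorphic $\tilde g$. Automatically $\tilde g|\partial A_n=g$, and since the BOPI deformation is fixed on $\partial A_n\supseteq\partial T_n$, its restriction over $T_n$ is a homotopy rel $\partial T_n$ from $b$ to $\tilde g|T_n$. This solves the lifting problem, whence $eX\hookrightarrow sX$ is a weak equivalence.

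The hard part, I expect, is the purely topological claim that $W$ and $Z_n$ are strong deformation retracts of $A_n$; granting contractibility this is immediate, since the inclusions are cofibrations and $A_n$ is contractible. Contractibility of $Z_n$ I would prove by two explicit homotopies. First, the straight-line homotopy $t\mapsto\operatorname{Re}t+(1-s)\,i\operatorname{Im}t$ retracts $A_n$ onto its real slice $H_n=\{\sum_j t_j=1\}\cap\R^{n+1}$ while keeping each face $\{t_j=0\}$ and $T_n$ invariant, reducing everything to the real picture $Z_n\cap H_n=\big(\bigcup_j\{u_j=0\}\big)\cup T_n$. Second, the map $u\mapsto u^+/\sum_j u_j^+$, where $u_j^+=\max(u_j,0)$ and $\sum_j u_j^+\geq\sum_j u_j=1$, retracts $H_n$ onto $T_n$, and both it and the associated straight-line homotopy keep every coordinate hyperplane $\{u_j=0\}$ invariant, hence preserve $Z_n\cap H_n$ and retract it onto $T_n$. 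Checking that these retractions are well defined and preserve the relevant subvarieties is the one genuinely hands-on step; the remainder is a direct translation of BOPI into simplicial language.
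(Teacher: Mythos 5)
Your proof is correct, and its second half takes a genuinely different route from the paper's. The fibrancy argument is essentially the author's: he identifies horn filling for $eX$ with the interpolation property for the union $Z_n$ of coordinate hyperplanes in $\C^n$ (your $W$, read in the coordinates $(t_i)_{i\neq k}$) and quotes the equivalence of that property with BOPI for Oka manifolds, whereas you re-derive the extension by hand from BOPI plus a continuous extension --- same content. For the weak equivalence, however, the paper does not use the relative-lifting characterisation: it fixes a base point, invokes the combinatorial description of $\pi_m$ of a Kan complex, and proves surjectivity and injectivity of $\pi_m(eX,\ast)\to\pi_m(sX,\ast)$ separately, using compatible retractions $\rho_m:A_m\to T_m$ and an explicit collapse map $\sigma_m:T_m\times I\to T_{m+1}$ to convert the BOPI deformation into an $(m+1)$-simplex of $sX$. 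Your single lifting argument (extend the holomorphic boundary datum and the continuous filler over $A_n$ using that $\partial A_n\cup T_n$ is a contractible cofibred subspace, then apply BOPI rel $\partial A_n$) handles surjectivity and injectivity at once, avoids base points and the $\sigma_m$ bookkeeping, and is arguably cleaner; the price is that you must quote the homotopy-lifting criterion for weak equivalences of Kan complexes and supply the contractibility of $\partial A_n\cup T_n$, which your two-step retraction does correctly (the paper's injectivity step uses the same subspace, there called $T_{m+1}\cup W_{m+1}$, and asserts the analogous acyclic cofibration without proof). Three small points: the lifting criterion is also needed for $n=0$, but that case is vacuous here since $e_0X=s_0X$; both you and the paper implicitly use that compatible holomorphic maps on the faces of $A_n$ glue to a holomorphic map on their normal-crossing union; and your $Z_n$ clashes with the paper's $Z_n$, which denotes the coordinate hyperplanes in $\C^n$.
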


\begin{proof}  Let $Z_n=\{(z_1,\dots,z_n)\in\C^n:z_j=0\text{ for some }j\}$ be the union of the coordinate hyperplanes in $\C^n$, $n\geq 2$.  If $X$ is an Oka manifold, every holomorphic map $Z_n\to X$ extends to a holomorphic map $\C^n\to X$, but this is precisely what it means for $eX$ to be fibrant.

The homotopy groups $\pi_m(K,\ast)$, $m\geq 1$, of a Kan complex $K$ with respect to a base point $\ast\in K_0$ may be simply described as follows:
$$\pi_m(K,\ast)=\{a\in K_m: d_j a=\ast\text{ for }j=0,\dots,m\}/\sim,$$
where $d_j:K_m\to K_{m-1}$ is the face map that in the case of $sX$ and $eX$ acts by precomposition by the map
$$\delta_j:(t_0,\dots,t_{m-1})\mapsto(t_0,\dots,t_{j-1},0,t_j,\dots,t_{m-1}),$$
and $\sim$ is the equivalence relation with $a\sim b$ for $a,b\in K_m$ with all faces $\ast$ if there is $c\in K_{m+1}$ such that $d_j c=a$ for some $j$, $d_j c=b$ for another $j$, and $d_j c=\ast$ for the remaining values of $j$.  Identifying vertices $a,b\in K_0$ if there is $c\in K_1$ with $d_0 c=a$ and $d_1 c=b$ (this is an equivalence relation) gives the set $\pi_0(K)$ of path components of $K$.  (See e.g.\ \cite{Curtis}, Th.\ 2.4, or \cite{Selick}, Sec.\ 8.2---homotopy groups of non-fibrant simplicial sets are not so easily dealt with.) 

Since $X$ is Oka, two points in the same path component of $X$ can be joined by a holomorphic image of $\C$.  Thus the inclusion $eX\hookrightarrow sX$ induces a bijection $\pi_0(eX)\to\pi_0(sX)$.

By induction over $m$ we obtain continuous retractions $\rho_m:A_m\to T_m$, $m\geq 0$, such that $\rho_{m+1}\circ\delta_j=\delta_j\circ \rho_m$ for $j=0,\dots,m$, so $\rho_m$ retracts each face of $A_m$ onto the corresponding face of $T_m$.  The continuous surjection $\sigma_m:T_m\times I\to T_{m+1}$, 
$$(t_0,\dots,t_m,s) \mapsto (t_0(1-s),t_1,\dots,t_m,t_0s),$$
$m\geq 1$, collapses each segment $\{x\}\times I$, where $x$ belongs to the face of $T_m$ with $t_0=0$, and makes no other identifications.

Let $m\geq 1$ and choose a base point $\ast\in X$.  To prove surjectivity of the induced map $\pi_m(eX,\ast)\to\pi_m(sX,\ast)$, we need to show that if $a\in s_m X$ has all faces $\ast$, then there is $b\in e_m X$ with all faces $\ast$ that is equivalent to $a$ by some $c\in s_{m+1}X$.  Now $a_0=a\circ \rho_m:A_m\to X$ is continuous with all faces $\ast$, so since $X$ is Oka, there is a continuous deformation $a_t$, $t\in I$, of $a_0$, such that $a_1$ is holomorphic and $a_t$ has all faces $\ast$ for all $t\in I$.  The restriction to $T_m\times I$ of the deformation factors through $\sigma_m$ by a map $T_{m+1}\to X$, which is continuous since $\sigma_m$ is a quotient map, and which is the desired $c$.

To prove injectivity of the induced map $\pi_m(eX,\ast)\to\pi_m(sX,\ast)$, we need to show that if $a,b\in e_m X$ with all faces $\ast$ are equivalent by $c\in s_{m+1}X$, say $dc=(a,b,\ast,\dots,\ast)$, then $a$ and $b$ are also equivalent by some $c'\in e_{m+1}X$.  Continuously extend $c$ to $T_{m+1}\cup W_{m+1}$, where $W_{m+1}=\{(t_0,\dots,t_{m+1})\in A_{m+1}:t_j=0\text{ for some }j\}$, such that $dc$ is still $(a,b,\ast,\dots,\ast)$.  Use the acyclic cofibration $T_{m+1}\cup W_{m+1}\hookrightarrow A_{m+1}$ to further extend $c$ to a continuous map $c:A_{m+1}\to X$.  Since $X$ is Oka, $c$ may be deformed to $c'\in e_{m+1}X$ with $dc'=dc$.
\end{proof}

The author has tried to directly construct a strong deformation retraction from $sX$ onto $eX$, but without success.

The proof shows that a complex manifold is homotopically elliptic if and only if it satisfies the interpolation property with respect to the Stein inclusions $Z_n\hookrightarrow \C^n$, $n\geq 2$, and a weak version of BOPI with respect to the Stein inclusions $W_n\hookrightarrow A_n\cong \C^n$, $n\geq  1$.

\section{Generalisations}

\noindent
Theorem \ref{mainresult} is a special case of a more general result.  Let $f:X\to Y$ be a holomorphic map between complex manifolds and $T\hookrightarrow S$ be a Stein inclusion.  Let
$$\xymatrix{
T \ar[r] \ar[d] & X \ar[d] \\
S \ar[r] & Y} $$
be a commuting square of holomorphic maps.  Let $L_\O$ be the space, with the compact-open topology, of holomorphic liftings in the square, and $L_\Cont$ be the space of continuous liftings.  Let $eL_\O$ be the simplicial set whose $n$-simplices, $n\geq 0$, are the holomorphic maps $\lambda:S\times A_n\to X$ such that $\lambda(\cdot,t)$ is a lifting in the square for every $t\in A_n$, and whose maps taking $m$-simplices to $n$-simplices are given by precomposing in the second variable by the holomorphic maps $\theta_*:A_n\to A_m$ described above.  There are inclusions
$$eL_\O \hookrightarrow^{\!\!\!\!\!\! i'} \ sL_\O \hookrightarrow^{\!\!\!\!\!\! i''} \ sL_\Cont.$$
If $f$ satisfies POPI, then $i''$ is a weak equivalence (see \cite{Larusson1}, \S 16).  Also, the proof of the Theorem is easily generalised to show that if $f$ satisfies BOPI, then $eL_\O$ is fibrant and $i''\circ i'$ is a weak equivalence.  Thus, if $f$ satisfies POPI, $i'$ is a weak equivalence of Kan complexes.

Theorem \ref{mainresult} is the case when $T$ is empty and $S$ and $Y$ are points.  A less special case is when $T$ is empty and $Y$ is a point.  Then liftings in the square are simply maps $S\to X$, so we write $e\O(S,X)$ for $eL_\O$ and conclude that if $X$ is Oka, then the inclusions $e\O(S,X) \hookrightarrow s\O(S,X) \hookrightarrow s\Cont(S,X)$ are weak equivalences of Kan complexes.

Generalising this in a different direction, we can represent the the homotopy type of the space $\Cont(M,X)$ of continuous maps from any smooth manifold $M$ to an Oka manifold $X$ by a simplicial set whose simplices are holomorphic maps into $X$.    Namely, assuming as we may that $M$ is real-analytic, by a well-known result of Grauert \cite{Grauert3}, $M$ can be real-analytically embedded into a Stein manifold $S$ such that $M$ is a strong deformation retract of $S$.  Then, if $X$ is Oka, the homotopy type of $\Cont(M,X)$ is given by the Kan complex $e\O(S,X)$.

For ease of reference, we summarise the above as a theorem.

\begin{theorem}  Let $X$ be an Oka manifold.
\begin{enumerate}
\item  For every Stein manifold $S$, the inclusions
$$e\O(S,X) \hookrightarrow s\O(S,X) \hookrightarrow s\Cont(S,X)$$
are weak equivalences of Kan complexes.
\item  For every smooth manifold $M$, there is a Stein manifold $S$ such that the homotopy type of $\Cont(M,X)$ is given by the Kan complex $e\O(S,X)$.
\end{enumerate}
\end{theorem}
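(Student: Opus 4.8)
The plan is to read off both statements from the discussion of lifting spaces immediately preceding the theorem, specialising the general framework and then invoking standard homotopy-theoretic formalities. For part (1), I would take $T$ empty and $Y$ a point, so that a lifting in the square is nothing but a holomorphic (resp. continuous) map $S\to X$, and the three simplicial sets $eL_\O$, $sL_\O$, $sL_\Cont$ become $e\O(S,X)$, $s\O(S,X)$, $s\Cont(S,X)$. Because $X$ is Oka, the structure map $X\to\ast$ satisfies both BOPI and POPI. The POPI half gives, via \cite{Larusson1}, \S 16, that $i''\colon s\O(S,X)\hookrightarrow s\Cont(S,X)$ is a weak equivalence, while the BOPI half --- by the generalisation of Theorem \ref{mainresult} recorded above --- gives that $e\O(S,X)$ is fibrant and that the composite $i''\circ i'$ is a weak equivalence. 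Since $s\O(S,X)$ and $s\Cont(S,X)$ are singular sets of topological spaces and hence automatically Kan, all three objects are Kan complexes, and the two-out-of-three property for weak equivalences forces $i'\colon e\O(S,X)\hookrightarrow s\O(S,X)$ to be a weak equivalence as well. This yields (1).

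For part (2), I would first replace the smooth manifold $M$ by a real-analytic manifold in its diffeomorphism class, which is harmless by Whitney's theorem. Grauert's embedding result \cite{Grauert3} then produces a Stein manifold $S$ containing $M$ as a real-analytic submanifold that is a strong deformation retract of $S$. Writing $i\colon M\hookrightarrow S$ for the inclusion and $r\colon S\to M$ for the retraction, precomposition turns a strong deformation retraction $S\times I\to S$ into a homotopy between $(i\circ r)^*$ and the identity of $\Cont(S,X)$ --- the required continuity coming from the exponential law, valid since the manifold $S$ is locally compact Hausdorff --- so that the restriction map $i^*\colon\Cont(S,X)\to\Cont(M,X)$ is a homotopy equivalence with inverse $r^*$. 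Consequently $s\Cont(S,X)\to s\Cont(M,X)$ is a weak equivalence of singular sets, and combining this with part (1) the chain
$$e\O(S,X)\ \xrightarrow{\ \simeq\ }\ s\Cont(S,X)\ \xrightarrow{\ \simeq\ }\ s\Cont(M,X)$$
exhibits $e\O(S,X)$ as weakly equivalent to the singular set of $\Cont(M,X)$, which is precisely the assertion that $e\O(S,X)$ carries the homotopy type of $\Cont(M,X)$.

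Most of the real content has already been supplied before the statement, so the proof is largely bookkeeping; the two places where genuine input enters from outside are the POPI-to-weak-equivalence implication of \cite{Larusson1} and Grauert's deformation-retract embedding. The step I expect to need the most care is the verification that all simplicial sets in sight are fibrant before applying two-out-of-three: fibrancy of $e\O(S,X)$ is not automatic and relies on the BOPI extension property established in the proof of Theorem \ref{mainresult}, whereas the fibrancy of $s\O(S,X)$ and $s\Cont(S,X)$ is free because they are singular complexes. Once fibrancy is in hand, the remaining arguments are purely formal.
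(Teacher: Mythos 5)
Your proposal is correct and follows essentially the same route as the paper, which simply summarises the preceding discussion: part (1) is the specialisation of the lifting-square framework to $T=\emptyset$, $Y$ a point, using POPI for $i''$ and the BOPI generalisation of Theorem \ref{mainresult} for $i''\circ i'$, then two-out-of-three; part (2) is Grauert's embedding of a real-analytic model of $M$ as a strong deformation retract of a Stein manifold combined with part (1). The extra care you take over fibrancy and the exponential law is sound but not a departure from the paper's argument.
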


\end{document}